\newtheorem{theorem}{Theorem}[section]
\newtheorem{example}{Example}[section]
\newtheorem{definition}{Definition}[section]
\newtheorem{corollary}{Corollary}[section]
\numberwithin{equation}{section}
\newcommand{\abs}[1]{\left | #1\right |}
\begin{document}

\title[slowly oscillating double sequences]{Functions preserving slowly oscillating double sequences}
\author[ Huseyin Cakalli \& Richard F. Patterson ]{\;\;\;  Huseyin Cakalli* \& Richard F. Patterson**\\
*D\lowercase{epartment of} M\lowercase{athematics}, M\lowercase{altepe} U\lowercase{niversity}, M\lowercase{altepe}-I\lowercase{stanbul}, Turkey\\
**D\lowercase{epartment of} M\lowercase{athematics and} S\lowercase{tatistics},
U\lowercase{niversity of} N\lowercase{orth} F\lowercase{lorida}, F\lowercase{lorida}, USA}
%J\lowercase{acksonville}
\address{Department of Mathematics, Maltepe University, Maltepe- Istanbul, Turkey}
\email{huseyincakalli@maltepe.edu.tr; hcakalli@gmail.com}
\address{Department of Mathematics and Statistics,
University of North Florida Jacksonville, Florida, 32224}
\email{rpatters@unf.edu}
\subjclass[2010]{Primary 40B05, Secondary 40C05}

\date{\today}

\keywords{Double Sequences, slowly oscillating, $P$-convergent, continuity }

\maketitle
\begin{abstract}
A double sequence $\textbf{x}=\{x_{k,l}\}$ of points in $\textbf{R}$ is slowly oscillating if for any given $\varepsilon>0$, there exist $\alpha=\alpha(\varepsilon)>0$, $\delta=\delta (\varepsilon) >0$, and $N=N(\varepsilon)$ such that $|x_{k,l}-x_{s,t}|<\varepsilon$ whenever $k,l\geq N(\varepsilon)$ and $k\leq s \leq (1+\alpha)k$, $l\leq t \leq (1+\delta)l$. We study continuity type properties of factorable double functions defined on a double subset $A\times A$ of $\textbf{R}^{2}$ into $\textbf{R}$, and obtain interesting results related to uniform continuity, sequential continuity, and a newly introduced type of continuity of factorable double functions defined on a double subset $A\times A$ of $\textbf{R}^{2}$ into $\textbf{R}$.

\end{abstract}

\section{\bf Introduction}
In 1900, Pringsheim (\cite{Pring1}) introduced the concept of convergence of real double sequences. Four years later, Hardy (\cite{HardyOntheconvergenceofcertainmultipleseries}) introduced the notion of regular convergence for double sequences in the sense that double sequence has a limit in Pringsheim's sense and has one sided limits (see also \cite{ Ham1, robison}).
A considerable number of papers which appeared in recent years study double sequences from various points of view (see \cite{AlotaibiandursaleenandAlghamdi,DjurcicandKocinacandZizovic,DuttaACharacterizationoftheClassofStatisticallyPre-CauchyDoubleSequencesofFuzzyNumbers,MursaleenandMohiuddineandSyed,PattersonAtheoremonentirefourdimensionalsummabilitymethods,PattersonFourdimensionalmatrixcharacterizationP-convergencefieldsofsummabilitymethods, PattersonRH-regularTransformationsWhichSumsaGivenDoubleSequence,PattersonandSavasAsymptoticEquivalenceofDoubleSequences}). Some results in the investigation are generalizations of known results concerning simple sequences to certain classes of double sequences, while other results reflect a specific nature of the Pringsheim convergence (e.g., the fact that a double sequence may converge without being bounded).
First usage of the slowly oscillating concept of real single sequences goes back to beginning of twentieth century (\cite[1907, Hardy]{HardySlowlysingletitleSometheoremsconcerninginfiniteseries}, and   (\cite[1910, Landou]{LandauSlowlyoscillatingtitleÜberdieBedeutungeinigerneuerGrenzwertsätzederHerrenHardyundAxel}) while the slowly oscillating concept of real double  sequences seems to be first studied in  \cite[1939, Knopp]{KnoppLimitierungsUmkehrsatzefurDoppelfolgen} (see also \cite{HardyTheoremsrelatingtothesummabilityandconvergenceofslowlyoscillating}, and \cite{MoriczTauberiantheoremsforCesarosummabledoublesequences}).

The aim of this paper is to investigate slowly oscillating double sequences and newly defined types of continuities for factorable double functions.

\section{\bf Preliminaries}
%, and Preliminary Results
\begin{definition} [Pringsheim, 1900 \cite{Pring1}] A double sequence ${\bf x}=\{x_{k,l}\}$ is Cauchy provided that, given an $\epsilon > 0$ there exists an $N \in {\bf N}$ such that $|x_{k,l} - x_{s,t}| < \epsilon$ whenever $k,l,s,t > N$.
\end {definition}

\begin{definition}
[Pringsheim, 1900 \cite{Pring1}] A double sequence ${\bf x}=\{x_{k,l}\}$
has a {\bf Pringsheim limit} $L$ (denoted by P-$\lim x=L$) provided that, given an $\epsilon > 0$ there
exists an $N \in {\bf N}$ such that $\left|x_{k,l} - L\right| < \epsilon$
whenever $k,l > N$.  Such an $\bf x$ is described more briefly as ``$P$-convergent''.
\end{definition}
If P-$\lim |\textbf{x}| = \infty$, (equivalently, for every $\varepsilon > 0$  there are $n_{1}, n_{2}\in{N}$ such that $| x_{m,n} |>M$  whenever $m>n_{1}$, $n>n_{2}$), then $\textbf{x}=\{x_{m,n}\}$ is said to be definitely divergent.

A double sequence $\textbf{x}=\{x_{m,n}\}$ is bounded if there is an $M>0$  such that $|x_{m,n} |< M$ for all $m, n \in{\textbf{N}}$.
Notice that a $P$-convergent double sequence need not be bounded.

\begin{definition}
[Patterson, 2000 \cite{rfp}] A double sequence $\bf {y}$ is a {\bf double subsequence} of $\bf x$
provided that there exist increasing index sequences $\{n_{j}\}$
and $\{k_{j}\}$ such that, if $\{x_{j}\} = \{x_{n_{j},k_{j}}\}$, then $\bf y$ is
formed by
\[ \begin{array}{cccc}
  x_{1}&x_{2}& x_{5} &x_{10}\\
   x_{4}&x_{3}&x_{6}&-\\
 x_{9}&x_{8}&x_{7}&- \\
 - &-&-&-. \\
        \end{array}\]
\end{definition}

\section{\bf Results}

\begin{definition} (\cite{Boos}, \cite{MoriczTauberiantheoremsforCesarosummabledoublesequences}) \label{Defslowlyoscillatingdoublesequence} A double sequence $\textbf{x}=\{x_{k,l}\}$ of points in $\textbf{R}$ is called slowly oscillating
%if $$\lim_{\lambda \rightarrow 1^{+}}\overline{\lim}_{n}\max _{n+1\leq k\leq [\lambda n]} |x_{k,l}  -x_{s,t} | =0 $$ in which $[\lambda n]$ denotes the integer part of $\lambda n$.
for any given $\varepsilon>0$, there exist $\alpha=\alpha(\varepsilon)$, $\delta=\delta (\varepsilon) >0$, and $N=N(\varepsilon)$ such that $|x_{k,l}-x_{s,t}|<\varepsilon$, if $k,l\geq N(\varepsilon)$ and $k\leq s \leq (1+\alpha)k$, $l\leq t \leq (1+\delta)l$.
%A sequence $(x_{n})$ of points in $\textbf{R}$ is slowly oscillating if $$\lim_{\lambda \rightarrow 1^{+}}\overline{\lim}_{n}\max _{n+1\leq k\leq [\lambda n]} |x_{k}  -x_{n} | =0 $$ in which $[\lambda n]$ denotes the integer part of $\lambda n$. Using $\varepsilon>0$ s and $\delta$ s this is equivalent to, for any given $\varepsilon>0$, there exists $\delta=\delta (\varepsilon) >0$ and $N=N(\varepsilon)$ such that $|x_{m}-x_{n}|<\varepsilon$ if $n\geq N(\varepsilon)$ and $n\leq m \leq (1+\delta)n$.
\end{definition}
Any Cauchy double sequence is slowly oscillating, so any P-convergent double sequence is. The converse is easily seen to be false as in the single dimensional case as the following example shows.

\begin{example} Double sequence defined by
let $s_{n}=\log n$
\[ \begin{array}{ccccc}
s_{1}& s_{2}& s_{3}&s_{4}&\cdots\\
 s_{2}& s_{2}& s_{3}&s_{4}&\cdots\\
 s_{3}& s_{3}& s_{3}&s_{4}&\cdots\\
 s_{4}& s_{4}& s_{4}&s_{4}&\cdots\\
 \vdots& \vdots& \vdots&\vdots&\ddots\\
         \end{array}\]
is not $P$-convergent nor Cauchy, however it is a slowly oscillating double sequence.
\end{example}

\begin{theorem} \label{TheoUniformcontimpliesslowlyosclcont} If a factorable double function $f$ defined on a double subset $A\times A$ of ${\bf R}^{2}$
is uniformly continuous, then it preserves factorable slowly oscillating  double sequences from $A\times A$.
\end{theorem}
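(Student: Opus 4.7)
The plan is to invoke uniform continuity of $f$ on $A\times A$ and apply slow oscillation in each coordinate separately. I read a factorable slowly oscillating double sequence in $A\times A$ as one of the form $\{(u_k,v_l)\}$ with $\{u_k\},\{v_l\}\subset A$; the slow oscillation of the pair in the sense of Definition \ref{Defslowlyoscillatingdoublesequence} (applied coordinate-wise in $\mathbf{R}^2$) forces both single sequences $\{u_k\}$ and $\{v_l\}$ to be slowly oscillating, which can be read off by specializing $t=l$ to control the first coordinate and $s=k$ to control the second. The target is to prove that the real double sequence $\{f(u_k,v_l)\}$ satisfies Definition \ref{Defslowlyoscillatingdoublesequence}.

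Given $\varepsilon>0$, I would first use uniform continuity of $f$ to produce $\eta=\eta(\varepsilon)>0$ with
\[
|f(x,y)-f(x',y')|<\varepsilon \quad \text{whenever} \quad |x-x'|<\eta \text{ and } |y-y'|<\eta.
\]
Next, slow oscillation of $\{u_k\}$ at scale $\eta$ supplies $\alpha=\alpha(\eta)>0$ and $N_1=N_1(\eta)$ with $|u_k-u_s|<\eta$ for all $k\geq N_1$ and $k\leq s\leq (1+\alpha)k$. Symmetrically, slow oscillation of $\{v_l\}$ yields $\delta=\delta(\eta)>0$ and $N_2$ with $|v_l-v_t|<\eta$ for all $l\geq N_2$ and $l\leq t\leq (1+\delta)l$.

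Setting $N:=\max(N_1,N_2)$, the two coordinate bounds combined with the uniform-continuity modulus give $|f(u_k,v_l)-f(u_s,v_t)|<\varepsilon$ for every $k,l\geq N$ and every $s,t$ with $k\leq s\leq (1+\alpha)k$ and $l\leq t\leq (1+\delta)l$. This is precisely the slow oscillation of $\{f(u_k,v_l)\}$ with parameters $(\alpha,\delta,N)$ depending only on $\varepsilon$, completing the proof.

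The main obstacle is conceptual rather than computational: one must pin down the meaning of \emph{factorable} here and verify that slow oscillation of the factorable pair propagates to each coordinate sequence. After this reduction, the argument collapses to a routine composition of uniform continuity with a finite intersection of index thresholds, so no delicate estimates are required.
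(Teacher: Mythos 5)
Your proposal is correct and follows essentially the same route as the paper: feed the uniform-continuity modulus $\eta(\varepsilon)$ into the slow-oscillation condition of the input sequence and inherit the parameters $(\alpha,\delta,N)$. The only difference is cosmetic --- the paper applies slow oscillation directly to the $\mathbf{R}^{2}$-valued double sequence with the Euclidean absolute value, whereas you decompose the factorable pair into its coordinate sequences $\{u_k\}$, $\{v_l\}$ and recombine; your version is in fact more explicit about what ``factorable'' contributes, which the paper leaves implicit.
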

\begin{proof} Suppose that $f$ is uniformly continuous, and let
\[ \begin{array}{ccccccc}
x_{1,1}& x_{1,2} & x_{1,3} &\cdots\\
      x_{2,1}& x_{2,2} & x_{2,3} &\cdots\\
     x_{3,1}& x_{3,2} &x_{3,3}&\cdots\\
    \vdots&\vdots& \vdots&\ddots
         \end{array}\]
be any slowly oscillating factorable double sequence. To prove that $\{f(x_{n,m})\}$ is slowly oscillating, take any $\varepsilon > 0$. Uniform continuity of $f$ implies that there exists a $\delta >0$ such that $|f(x)-f(y)|< \varepsilon$ whenever  $|x-y|< \delta$ for $x,y\in{A\times A}$ where the absolute value in the latter in $\textbf{R}^{2}$. Since $\{x_{n,m}\}$ is slowly oscillating for this $\delta$, there exist $\alpha_{1}=\alpha_{1}(\delta) >0$, $\delta_{1}=\delta_{1} (\delta)>0$ and $N=N(\delta)$ such that $|x_{k,l}-x_{s,t}|<\delta$, if $k,l\geq N(\delta)$ and $k\leq s \leq (1+\alpha_{1})k$, $l\leq t \leq (1+\delta_{1})l$. Hence $|x_{k,l}-x_{s,t}|<\delta$, if $k,l\geq N(\delta )$ and $k\leq s \leq (1+\alpha_{1})k$, $l\leq t \leq (1+\delta_{1})l$. It follows from this that
%$|f(x_{k,l})-f(x_{s,t})|< \varepsilon$ whenever $k,l\geq N$ and $k\leq s \leq (1+\alpha_{1})k$, $l\leq t \leq (1+\delta_{1})l$.
$\{f(x_{n,m})\}$ is slowly oscillating. This completes the proof of the theorem.
\end{proof}

\begin{theorem} \label{TheoFunctionspreservesdoublequasiCauchysequencesarecontinuous}
If a factorable double function $f$ defined on a double subset $A\times A$ of ${\bf R}^{2}$ preserves factorable slowly oscillating double sequences from $A\times A$, then it preserves factorable  P-convergent double sequences from $A\times A$.

\end{theorem}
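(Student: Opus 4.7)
The plan is to argue by contradiction: assume $\{f(a_k,b_l)\}$ fails to be $P$-convergent, use the completeness of $\textbf{R}$ to extract ``bad'' pairs of indices, and then manufacture from these indices a factorable slowly oscillating double sequence whose $f$-image violates the slowly oscillating condition, contradicting the hypothesis.

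More precisely, I would fix a factorable $P$-convergent double sequence $\{(a_k,b_l)\}$ in $A\times A$. From Pringsheim's definition, the coordinate sequences $(a_k)$ and $(b_l)$ converge as single sequences to some $\alpha$ and $\beta$ respectively, which one sees just by fixing one of the two indices past the Pringsheim threshold. Suppose the image $\{f(a_k,b_l)\}$ is not $P$-convergent; since $\textbf{R}$ is complete, $P$-convergent coincides with $P$-Cauchy for real double sequences, so there exist $\varepsilon>0$ and index sequences $k_j,l_j,s_j,t_j\to\infty$ with $|f(a_{k_j},b_{l_j})-f(a_{s_j},b_{t_j})|\geq\varepsilon$ for every $j$.

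Next I would build the witness by interleaving. Define $\tilde a_{2j}=a_{k_j}$, $\tilde a_{2j+1}=a_{s_j}$, and $\tilde b_{2j}=b_{l_j}$, $\tilde b_{2j+1}=b_{t_j}$. Both $\tilde a$ and $\tilde b$ take values in $A$, and being interleavings of two subsequences each tending to the common limit ($\alpha$ and $\beta$ respectively), they themselves converge to $\alpha$ and $\beta$. Hence the factorable double sequence $\{(\tilde a_n,\tilde b_m)\}$ is $P$-convergent, hence Cauchy, hence slowly oscillating; by hypothesis $\{f(\tilde a_n,\tilde b_m)\}$ is slowly oscillating as well. Choosing parameters $\alpha_1,\delta_1>0$ and $N$ that witness the slowly oscillating condition for $\varepsilon/2$, and then taking $j$ so large that $2j\geq N$ and $(2j+1)/(2j)\leq 1+\min(\alpha_1,\delta_1)$, the indices $n=m=2j$ and $s=t=2j+1$ fall inside the admissibility window and produce $|f(a_{k_j},b_{l_j})-f(a_{s_j},b_{t_j})|<\varepsilon/2$, contradicting the choice of the $j$-indexed bad pairs.

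The main obstacle I anticipate is bookkeeping rather than conceptual: one has to justify that $P$-convergence of a factorable double sequence forces both coordinate sequences to converge as single sequences, and that the two-dimensional admissibility window $k\leq s\leq(1+\alpha_1)k$, $l\leq t\leq(1+\delta_1)l$ can be satisfied simultaneously in both directions. The latter is easy since $(2j+1)/(2j)\to 1$ pushes both ratios as close to $1$ as one wishes, so no delicate balancing between $\alpha_1$ and $\delta_1$ is required.
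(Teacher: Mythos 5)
Your argument is correct, but it is organized quite differently from the paper's. The paper works directly: given a $P$-convergent factorable double sequence with $P$-limit $L$, it interleaves the terms $a_{i,j}$ with copies of the constant $L$ to form a new $P$-convergent (hence slowly oscillating) double array in which each $a_{i,j}$ sits adjacent to an $L$ at index positions whose ratios tend to $1$; applying the preservation hypothesis and reading off the slow-oscillation estimate at those adjacent positions gives $|f(a_{i,j})-f(L)|\to 0$, so the image is $P$-convergent with the explicitly identified limit $f(L)$. You instead argue by contraposition through the Cauchy criterion: you reduce $P$-convergence of the image to $P$-Cauchyness via completeness of $\mathbf{R}$, extract bad index quadruples, and interleave the corresponding coordinate subsequences so that the two offending points occupy indices $2j$ and $2j+1$ of a new $P$-convergent factorable double sequence, whence slow oscillation of its image contradicts the $\varepsilon$-separation for large $j$. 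Both proofs hinge on the same underlying device --- manufacturing an auxiliary slowly oscillating double sequence in which the two points to be compared sit at indices with ratio tending to $1$ --- but the paper's version yields the stronger conclusion that the image converges to $f(L)$ (a form of $P$-sequential continuity), whereas yours establishes only that the image is $P$-convergent to some limit, which is all the statement literally demands; in compensation, your version makes explicit two steps the paper leaves tacit, namely that $P$-Cauchy and $P$-convergent coincide for real double sequences and that a factorable $P$-convergent double sequence has convergent coordinate sequences.
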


\begin{proof} Suppose that $f$ preserves factorable slowly oscillating double sequences from $A\times A$. Let
\[ \begin{array}{ccccccc}
a_{1,1}& a_{1,2} & a_{1,3} &\cdots\\
      a_{2,1}& a_{2,2} & a_{2,3} &\cdots\\
     a_{3,1}& a_{3,2} &a_{3,3}&\cdots\\
    \vdots&\vdots& \vdots&\ddots
         \end{array}\]
be any $P$-convergent factorable double sequence with $P$-limit $L$. Then the sequence \[ \begin{array}{ccccccc}
a_{1,1}\;L\;& a_{1,2}\;L\;& a_{1,3}\;L\; &...\\
L&L& L &L& L &L&...\\
     a_{2,1}\;L\;& a_{2,2}\;L\;& a_{2,3}\;L\; &\cdots\\
   L&L& L &L& L &L&...\\
   a_{3,1}\;L\;& a_{3,2}\;L\;& a_{3,3}\;L\;&\cdots\\
  L&L& L &L& L &L&...\\
    \vdots&\vdots& \vdots&\vdots&\vdots& \vdots&\ddots
         \end{array}\]
is also $P$-convergent with $P$-limit $L$. Since any P-convergent double sequence is slowly oscillating, this sequence is slowly oscillating. So the transformed sequence of the sequence is slowly oscillating. Thus it follows that

\[ \begin{array}{ccccccc}
f(a_{1,1})\;f(L)\;& f(a_{1,2})\;f(L)\;& f(a_{1,3})\;f(L)\; &...\\
f(L)&f(L)& f(L) &f(L)& f(L) &f(L)&...\\
     f(a_{2,1})\;f(L)\;& f(a_{2,2})\;f(L)\;& f(a_{2,3})\;f(L)\; &\cdots\\
   f(L)&f(L)& f(L) &f(L)& f(L) &f(L)&...\\
   f(a_{3,1})\;f(L)\;& f(a_{3,2})\;f(L)\;& f(a_{3,3})\;f(L)\;&\cdots\\
  f(L)&f(L)&f(L) &L& f(L) &f(L)&...\\
    \vdots&\vdots& \vdots&\vdots& \vdots&\vdots&\ddots
         \end{array}\]
is a slowly oscillating double sequence. Hence \[ \begin{array}{ccccccc}
f(a_{1,1})-f(L)\;& f(a_{1,2})-f(L)\;& f(a_{1,3})-f(L)\; &...\\
     f(a_{2,1})-f(L)\;& f(a_{2,2})-f(L)\;& f(a_{2,3})-f(L)\; &\cdots\\
     f(a_{3,1})-f(L)\;& f(a_{3,2})-f(L)\;& f(a_{3,3})-f(L)\;&\cdots\\
    \vdots&\vdots& \vdots&\ddots
         \end{array}\]
a $P$-convergent factorable double sequence with $P$-limit $0$. This implies that the transformed double sequence
\[ \begin{array}{ccccccc}
f(a_{1,1})\;& f(a_{1,2})\;& f(a_{1,3})\;&...\\
     f(a_{2,1})\;& f(a_{2,2})\;& f(a_{2,3})\; &\cdots\\
     f(a_{3,1})\;& f(a_{3,2})\;& f(a_{3,3})\;&\cdots\\
    \vdots&\vdots& \vdots&\ddots
         \end{array}\]
is $P$-convergent with $P$-limit $f(L)$. This completes the proof of the theorem.

\end{proof}

\begin{corollary} If a factorable double function $f$ defined on a double subset $A\times A$ of ${\bf R}^{2}$ preserves factorable slowly oscillating double sequences from $A\times A$, then it preserves $\lambda$-statistically convergent (single) sequences from $A\times A$.

\end{corollary}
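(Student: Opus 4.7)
The plan is to route through Theorem~\ref{TheoFunctionspreservesdoublequasiCauchysequencesarecontinuous} to first establish ordinary continuity of $f$ on $A\times A$, and then to appeal to the classical principle that ordinary continuous functions preserve $\lambda$-statistical convergence of single sequences.

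First, by the preceding theorem, the hypothesis already yields that $f$ preserves every factorable $P$-convergent double sequence from $A\times A$. To turn this into ordinary continuity of $f$ at an arbitrary $(L,M)\in A\times A$, I would take any single sequence $\{(p_n,q_n)\}$ in $A\times A$ that converges to $(L,M)$ in the Euclidean sense and form the factorable double sequence $\{(p_k,q_l)\}_{k,l}$. Its Pringsheim limit is $(L,M)$, so the preservation property implies that the real double sequence $\{f(p_k,q_l)\}$ is $P$-convergent with Pringsheim limit $f(L,M)$. Restricting to the diagonal $k=l$ then forces $f(p_n,q_n)\to f(L,M)$, proving sequential, hence ordinary, continuity of $f$ on $A\times A$.

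Second, I would invoke the standard fact that every ordinary continuous function preserves $\lambda$-statistically convergent sequences. Concretely, given a $\lambda$-statistically convergent single sequence $\{(p_n,q_n)\}$ in $A\times A$ with $\lambda$-statistical limit $(L,M)$ and given $\varepsilon>0$, continuity of $f$ at $(L,M)$ supplies a $\delta>0$ such that $|(p_n,q_n)-(L,M)|<\delta$ forces $|f(p_n,q_n)-f(L,M)|<\varepsilon$. The set of indices $n$ violating the first inequality has $\lambda$-density zero by hypothesis, so the set of indices violating the second inequality does as well. Thus $\{f(p_n,q_n)\}$ is $\lambda$-statistically convergent with $\lambda$-statistical limit $f(L,M)$, as required.

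The step I expect to require the most care is the diagonal-extraction argument: one must verify that the product-type construction $\{(p_k,q_l)\}$ is factorable in the paper's technical sense and that $P$-convergence of an $A\times A$-valued double sequence interacts correctly with restriction to the diagonal, so that the limit picked up along $k=l$ really is $f(L,M)$ rather than a coincidental pointwise value. Points of $A\times A$ lying on the boundary, where approximating sequences can only come from one side, need a small adaptation of the argument but are otherwise routine.
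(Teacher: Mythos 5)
Your argument is correct and follows the same overall route the paper intends: use Theorem~\ref{TheoFunctionspreservesdoublequasiCauchysequencesarecontinuous} to get preservation of factorable $P$-convergent double sequences, extract ordinary (sequential) continuity of $f$ from that, and then push continuity through to the $\lambda$-statistical setting. Where you diverge is in the final transfer step. The paper disposes of it in one line by invoking the fact that the $\lambda$-statistical method is a \emph{regular subsequential} method and citing \cite{CakalliandSonmezandAraslamdastatisticallywardcontinuity}; in that framework one typically uses the subsequential characterization, namely that $\lambda$-statistical convergence to a limit yields an index set of $\lambda$-density one along which the sequence converges in the ordinary sense, and then applies continuity along that subsequence. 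You instead give a direct inclusion-of-exceptional-sets argument: continuity at $(L,M)$ forces $\{n:\abs{f(p_n,q_n)-f(L,M)}\geq\varepsilon\}\subseteq\{n:\abs{(p_n,q_n)-(L,M)}\geq\delta\}$, and $\lambda$-density zero passes to subsets. This is more elementary and self-contained, needing only monotonicity of $\lambda$-density rather than the abstract machinery; the paper's route buys generality (the same one-liner works for any regular subsequential method). Your diagonal construction $\{(p_k,q_l)\}$ for deducing continuity is sound --- it is factorable by construction, $P$-convergent to $(L,M)$, and the diagonal of a $P$-convergent double sequence converges to the Pringsheim limit --- and the boundary-point caveat you raise at the end is in fact a non-issue, since sequential continuity at a point is tested against arbitrary domain sequences converging to it. The only genuine loose end, that the $\lambda$-statistical limit $(L,M)$ must lie in $A\times A$ for $f(L,M)$ to be defined, is a defect of the corollary's statement (and already of Theorem~\ref{TheoFunctionspreservesdoublequasiCauchysequencesarecontinuous}), not of your proof.
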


\begin{proof}
The proof follows from the regularity and subsequentiality of $\lambda$-statistically sequential method so is omitted (\cite{CakalliandSonmezandAraslamdastatisticallywardcontinuity}).

\end{proof}

\begin{theorem} \label{TheouniformcontinuityimpliespreservingquasidoublequasiCauchyness}
Suppose that $A\times A$ is a bounded subset of $R^2$. A two dimensional factorable real-valued function is uniformly continuous on $A\times A$ if and only if it preserves factorable slowly oscillating double sequences from $A\times A$.
\end{theorem}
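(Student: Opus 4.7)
The forward implication is precisely Theorem \ref{TheoUniformcontimpliesslowlyosclcont} already proved, so my plan focuses on the converse: if the factorable function $f$ preserves factorable slowly oscillating double sequences from $A\times A$, then $f$ must be uniformly continuous on $A\times A$. I will argue by contraposition, assuming $f$ is \emph{not} uniformly continuous and manufacturing a factorable slowly oscillating double sequence in $A\times A$ whose $f$-image is not slowly oscillating.

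The negation of uniform continuity supplies an $\varepsilon_{0}>0$ and points $(u_{n},v_{n}),(u_{n}',v_{n}')\in A\times A$ with $|u_{n}-u_{n}'|+|v_{n}-v_{n}'|<1/n$ but $|f(u_{n},v_{n})-f(u_{n}',v_{n}')|\geq\varepsilon_{0}$ for every $n$. Here the boundedness hypothesis on $A\times A$ enters through the Bolzano--Weierstrass theorem: two successive passages to subsequences yield an index sequence $(n_{j})$ along which $u_{n_{j}}\to p$ and $v_{n_{j}}\to q$, and then $u_{n_{j}}'\to p$, $v_{n_{j}}'\to q$ automatically.

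With these ingredients I will build a factorable double sequence by interleaving, setting $a_{2j-1}=u_{n_{j}}$, $a_{2j}=u_{n_{j}}'$ and $b_{2j-1}=v_{n_{j}}$, $b_{2j}=v_{n_{j}}'$. Each pair $(a_{k},b_{l})$ lies in $A\times A$, and both single sequences $(a_{k})$ and $(b_{l})$ converge (to $p$ and $q$ respectively), so the double sequence $\{(a_{k},b_{l})\}$ is $P$-convergent with limit $(p,q)$. By the observation following Definition \ref{Defslowlyoscillatingdoublesequence} this double sequence is slowly oscillating, and it is factorable by construction. Invoking the hypothesis, $\{f(a_{k},b_{l})\}$ must then be slowly oscillating as well.

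The key step --- and the place I expect the main work --- is to extract a contradiction by testing the definition of slowly oscillating at the indices $(k,l)=(2j-1,2j-1)$ and $(s,t)=(2j,2j)$. Since $s/k=t/l=2j/(2j-1)\to 1$, for every prescribed $\alpha,\delta>0$ and every $N$ one can choose $j$ so large that simultaneously $k,l\geq N$, $k\leq s\leq(1+\alpha)k$ and $l\leq t\leq(1+\delta)l$, while
\[
|f(a_{k},b_{l})-f(a_{s},b_{t})|=|f(u_{n_{j}},v_{n_{j}})-f(u_{n_{j}}',v_{n_{j}}')|\geq\varepsilon_{0}.
\]
This contradicts the slow oscillation of $\{f(a_{k},b_{l})\}$. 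The only delicate point is checking that consecutive indices really do meet the ``width--ratio'' condition $k\leq s\leq(1+\alpha)k$ for arbitrary $\alpha>0$ once $j$ is large, which is the routine estimate $1/(2j-1)\leq\alpha$; everything else is bookkeeping on the interleaved construction.
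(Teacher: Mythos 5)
Your argument is correct, and it shares the paper's overall skeleton for the converse --- assume $f$ is not uniformly continuous, extract witness pairs $(u_{n},v_{n}),(u_{n}',v_{n}')$ at distance less than $1/n$ with images separated by $\varepsilon_{0}$, and use the boundedness of $A\times A$ through Bolzano--Weierstrass --- but it diverges at the decisive step, to your advantage. The paper passes to a slowly oscillating double subsequence of $\{(a_{n},b_{n})\}$, then to one of the corresponding $\{(\bar{a}_{n},\bar{b}_{n})\}$, notes that both image sequences are slowly oscillating, and ends with the bare assertion that ``this is impossible''; as written no contradiction has actually been exhibited, because nothing in that argument ties the two image sequences to each other at comparable indices. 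You instead pass to genuinely convergent subsequences with a common limit $(p,q)$, interleave the two families into a single factorable double sequence $x_{k,l}=(a_{k},b_{l})$ that is $P$-convergent and hence slowly oscillating, and then explicitly violate the slow-oscillation condition of the image at the consecutive diagonal positions $(2j-1,2j-1)$ and $(2j,2j)$, using the routine ratio estimate $2j/(2j-1)\to 1$ to satisfy the $k\leq s\leq(1+\alpha)k$, $l\leq t\leq(1+\delta)l$ constraints for any prescribed $\alpha,\delta,N$. This interleaving is precisely the ingredient needed to make the contradiction concrete, so your route is not merely a valid alternative but in fact repairs the gap in the paper's own proof; the only extra cost is the harmless passage to convergent, rather than merely slowly oscillating, subsequences.
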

\begin{proof}
It immediately follows from Theorem \ref{TheoUniformcontimpliesslowlyosclcont} that two dimensional uniformly continuous functions preserve slowly oscillating sequences. Conversely, suppose that $f$ defined on $A \times A$ is not uniformly continuous. Then there exists an $\epsilon >0$ such that for any $\delta>0$ there exist  $(a,b),(\bar{a},\bar{b})\in A\times A$ with $\sqrt{(a-\bar{a})^{2} + (b-\bar{b})^{2}}<\delta$
but  $\abs{f(a,b)-f(\bar{a},b)}\geq \epsilon$,  $\abs{f(a,b)-f(a,\bar{b})}\geq \epsilon$, and  $\abs{f(a,b)-f(\bar{a},\bar{b})}\geq \epsilon$, respectively. Thus   for each positive integer $n$ we can choose $(a_{n},b_{n}),(\bar{a}_{n},\bar{b}_{n})\in A\times A$ with $\sqrt{(a_{n}-\bar{a}_{n})^{2} + (b_{n}-\bar{b}_{n})^{2}}<\frac{1}{n}$
but  $\abs{f(a_{n},b_{n})-f(\bar{a}_{n},b)}\geq \epsilon$,  $\abs{f(a_{n},b_{n})-f(a_{n},\bar{b}_{n})}\geq \epsilon$, and  $\abs{f(a_{n},b_{n})-f(\bar{a}_{n},\bar{b}_{n})}\geq \epsilon$,
Then since $A\times A$ is bounded there exists a slowly oscillating double subsequence of the double sequence $\{a_{n},b_{n}\}$ by a simple extension of Bolzano-Weierstrass theorem, $\{a_{n_{k}}, b_{n_{k}}\}$ say. Thus the corresponding double sequence $\{\bar{a}_{n_{k}},\bar{b}_{n_{k}}\}$ has a slowly oscillating double subsequence, say $\{\bar{a}_{n_{k_{m}}},\bar{b}_{n_{k_{m}}}\}$. It is easy to see that $\{\bar{a}_{n_{k_{m}}}, \bar{b}_{n_{k_{m}}}\}$ is a slowly oscillating sequence.
Since $f$ preserves slowly oscillating double sequences by the hypothesis, $\{f(a_{n_{k_{m}}}, b_{n_{k_{m}}})\}$ and $\{f(\bar{a}_{n_{k_{m}}},\bar{b}_{n_{k_{m}}})\}$ are slowly oscillating. This is impossible.
This contradiction completes the proof of the theorem.
%$\abs{a-\bar{a}}<\delta$, $\abs{b-\bar{b}}<\delta$, and
%$\abs{a_{n}-\bar{a_{n}}}<\frac{1}{n}$, $\abs{b_{n}-\bar{b_{n}}}<\frac{1}{n}$, and
\end{proof}

It is well known that uniform limit of a sequence of continuous functions is continuous. This is also true for two dimensional factorable real-valued functions that preserve slowly oscillating double sequences, i.e. uniform limit of a sequence of two dimensional factorable real-valued functions preserving slowly oscillating double sequences from  $A \times A$ of $\textbf{R}^2$ also preserves slowly oscillating double sequences from $A \times A$.

\begin{theorem}
If $(f_{n})$ is a sequence of two dimensional factorable real-valued functions preserving slowly oscillating double sequences from a
double interval $I \times I$ of $\textbf{R}^2$ and $(f_{n})$ is uniformly convergent to a function $f$, then $f$ preserves slowly oscillating double sequences from $I \times I$.
\end{theorem}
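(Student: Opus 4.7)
The plan is to run the standard ``$3\varepsilon$'' argument used to show that uniform limits preserve properties of this type, adapted to the slowly oscillating setting. Fix a factorable slowly oscillating double sequence $\{x_{k,l}\}$ in $I \times I$ and an $\varepsilon > 0$; the goal is to produce $\alpha, \delta, N$ witnessing that $\{f(x_{k,l})\}$ is slowly oscillating for this $\varepsilon$.

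First I would use the uniform convergence $f_n \to f$ on $I \times I$ to choose a single index $n_0$ with $|f_{n_0}(y) - f(y)| < \varepsilon/3$ for every $y \in I \times I$. Next, since by hypothesis $f_{n_0}$ preserves factorable slowly oscillating double sequences, the sequence $\{f_{n_0}(x_{k,l})\}$ is itself slowly oscillating; applying Definition \ref{Defslowlyoscillatingdoublesequence} to $\{f_{n_0}(x_{k,l})\}$ with the value $\varepsilon/3$ yields $\alpha = \alpha(\varepsilon/3)>0$, $\delta = \delta(\varepsilon/3)>0$ and $N = N(\varepsilon/3)$ such that $|f_{n_0}(x_{k,l}) - f_{n_0}(x_{s,t})| < \varepsilon/3$ whenever $k,l \geq N$, $k \leq s \leq (1+\alpha)k$ and $l \leq t \leq (1+\delta)l$.

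The final step is the triangle inequality: for the same $\alpha, \delta, N$,
\[
|f(x_{k,l}) - f(x_{s,t})| \leq |f(x_{k,l}) - f_{n_0}(x_{k,l})| + |f_{n_0}(x_{k,l}) - f_{n_0}(x_{s,t})| + |f_{n_0}(x_{s,t}) - f(x_{s,t})| < \tfrac{\varepsilon}{3} + \tfrac{\varepsilon}{3} + \tfrac{\varepsilon}{3} = \varepsilon,
\]
which is exactly the slowly oscillating condition for $\{f(x_{k,l})\}$.

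There is no real obstacle here; the only thing to be slightly careful about is that the index $n_0$ is chosen \emph{before} the parameters $\alpha, \delta, N$, so that $\alpha$, $\delta$, $N$ are allowed to depend on $n_0$ (equivalently, on $\varepsilon$). The uniform, rather than merely pointwise, convergence of $(f_n)$ is precisely what makes this ordering work, because it provides a single $n_0$ that controls $|f - f_{n_0}|$ at both points $x_{k,l}$ and $x_{s,t}$ simultaneously. Factorability of $\{x_{k,l}\}$ is not used beyond the fact that the hypothesis on $f_{n_0}$ is stated for factorable sequences, matching the factorable sequence we started with.
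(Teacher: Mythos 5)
Your proposal is correct and follows essentially the same route as the paper's own proof: both fix a single index from uniform convergence, invoke the preservation hypothesis for that one function to control the middle term, and finish with the $\varepsilon/3$ triangle inequality. If anything, your write-up is the cleaner of the two, since you state the uniform-convergence bound at a single point $y$ and are explicit that $\alpha$, $\delta$, $N$ are chosen after $n_0$.
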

\begin{proof}
Let $(x_{nk})$ be a slowly oscillating double sequence  and $\varepsilon > 0$. Then there exists a positive integer $N$ such that $|f_{n}(a,b)-f(\bar{a},\bar{b})|<\frac{\varepsilon}{3}$ for all $(a,b),(\bar{a},\bar{b})\in I\times I$ whenever $n\geq N$. As $f_{N}$ preserves slowly oscillating double sequences from $I \times I$, there exist a $\delta >0$ and a positive integer $N_{1}=N_{1}(\varepsilon)$, greater than $N$, such that $$|f_{N}(x_{k,l})-f_{N}(x_{s,t})|<\frac{\varepsilon}{3}$$ for $n\geq N_{1}$ and $k\leq s \leq (1+\delta)k$, $l\leq t \leq (1+\delta)l$.
 Now for  $n\geq N_{1}$ and $k\leq s \leq (1+\delta)k$, $l\leq t \leq (1+\delta)l$. Thus for  $n\geq N_{1}$ and $k\leq s \leq (1+\delta)k$, $l\leq t \leq (1+\delta)l$ we have $$ |f(x_{k,l})-f(x_{s,t})|\leq |f(x_{k,l})-f_{N}(x_{k,l})|+|f_{N}(x_{k,l})-f_{N}(x_{s,t})|+|f_{N}(x_{s,t})-f(x_{s,t})| $$ $$\leq \frac{\varepsilon}{3} + \frac{\varepsilon}{3} + \frac{\varepsilon}{3}= \varepsilon.$$
This completes the proof of the theorem.
\end{proof}

\begin{theorem}
If $(f_{m,n})$ is a double sequence of two dimensional factorable real-valued functions preserving slowly oscillating double sequences from a
double interval $I \times I$ of $\textbf{R}^2$ and $(f_{m,n})$ is uniformly P-convergent to a function $f$, then $f$ preserves slowly oscillating double sequences from $I \times I$.
\end{theorem}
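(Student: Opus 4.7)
The plan is to mimic the structure of the preceding theorem for single sequences of functions, replacing single-indexed uniform convergence by Pringsheim-uniform convergence and using the triangle inequality in the standard $\varepsilon/3$ fashion. First I would take an arbitrary slowly oscillating double sequence $(x_{k,l})$ in $I\times I$ and fix $\varepsilon>0$. The aim is to produce $\alpha,\delta>0$ and $N=N(\varepsilon)$ such that $|f(x_{k,l})-f(x_{s,t})|<\varepsilon$ whenever $k,l\geq N$ and $k\leq s\leq(1+\alpha)k$, $l\leq t\leq(1+\delta)l$.

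Next I would invoke uniform P-convergence of $(f_{m,n})$ to $f$: there exists $N_{0}$ such that for every pair $(m,n)$ with $m,n\geq N_{0}$ and for every $(a,b)\in I\times I$ one has $|f_{m,n}(a,b)-f(a,b)|<\varepsilon/3$. I would then fix a single pair, say $m_{0}=n_{0}=N_{0}$, and consider the single function $f_{m_{0},n_{0}}$, which by hypothesis preserves slowly oscillating double sequences from $I\times I$. Applied to $(x_{k,l})$, this produces $\alpha_{1}=\alpha_{1}(\varepsilon)>0$, $\delta_{1}=\delta_{1}(\varepsilon)>0$, and $N_{1}=N_{1}(\varepsilon)$ such that $|f_{m_{0},n_{0}}(x_{k,l})-f_{m_{0},n_{0}}(x_{s,t})|<\varepsilon/3$ whenever $k,l\geq N_{1}$ and $k\leq s\leq(1+\alpha_{1})k$, $l\leq t\leq(1+\delta_{1})l$.

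Finally, for $k,l\geq N_{1}$ and $(s,t)$ in the above range I would split
\[
|f(x_{k,l})-f(x_{s,t})|\leq |f(x_{k,l})-f_{m_{0},n_{0}}(x_{k,l})|+|f_{m_{0},n_{0}}(x_{k,l})-f_{m_{0},n_{0}}(x_{s,t})|+|f_{m_{0},n_{0}}(x_{s,t})-f(x_{s,t})|,
\]
bound the outer two terms by $\varepsilon/3$ using uniform P-convergence (since $m_{0},n_{0}\geq N_{0}$), and the middle one by $\varepsilon/3$ using the preservation property of $f_{m_{0},n_{0}}$. Summing yields $\varepsilon$, completing the verification that $(f(x_{k,l}))$ is slowly oscillating.

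The only genuine subtlety, and the point I would be most careful about, is the notion of uniform P-convergence for a double sequence of functions: one must extract a single index-pair threshold $N_{0}$ beyond which $|f_{m,n}-f|<\varepsilon/3$ uniformly on $I\times I$, which is exactly the Pringsheim analogue of uniform convergence and requires no diagonal argument. The rest is a straightforward triangle-inequality estimate that parallels the single-sequence case verbatim.
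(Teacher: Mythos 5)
Your argument is correct and is exactly what the paper intends: the paper omits the proof, stating only that it is "similar to the last theorem," and your $\varepsilon/3$ triangle-inequality decomposition through a single fixed approximant $f_{m_{0},n_{0}}$ with $m_{0},n_{0}\geq N_{0}$ is precisely that adaptation. If anything, your write-up is cleaner than the paper's proof of the preceding theorem, which contains several typographical slips in the same estimate.
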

The proof is similar to the last theorem and as of such it is omitted.
\section{Conclusion}
It is easy to see that double Cauchy sequences are slowly oscillating double. The converse is easily seen to be false as in the single dimensional case (\cite{CakalliSlowlyoscillatingcontinuity}, \cite{Vallin}, \cite{DikandCanak}).
One should also note that are nice connections between double slowly oscillating sequences and uniform continuity of two-dimensional real-valued functions. This is illustrated through the following theorem.
Suppose that $I\times I$ is any two dimensional bounded interval. Then a two dimensional factorable real-valued function is uniformly continuous on $I\times I$ if and only if it is defined on $I\times I$ and preserves factorable double slowly oscillating sequences from $I\times I$. Extensions and variations of the above theorem was also presented.

\bibliographystyle{amsplain}

\end{document}